%
\documentclass [12pt]{amsart}
\usepackage{amsmath,amssymb}
 \usepackage{amsthm, amsfonts}
 \usepackage{enumerate}
 \usepackage{amssymb,amsmath,amsthm, amsfonts}
 \usepackage{cite}
 \usepackage{xcolor}
\usepackage[pagewise]{lineno}
\newtheorem{theorem}{Theorem}[section]

\newtheorem{corollary}[theorem]{Corollary}
\theoremstyle{definition}
\newtheorem{definition}[theorem]{Definition}
\newtheorem{example}[theorem]{Example}

\theoremstyle{remark}

\numberwithin{equation}{section}

\begin{document}

\title [{{On graded weakly $J_{gr}$-semiprime}}]{ On graded weakly $J_{gr}$-semiprime submodules }

 \author[{{M. Alnimer, K. Al-Zoubi and M. Al-Dolat}}]{\textit{Malak Alnimer, Khaldoun Al-Zoubi}* and Mohammed Al-Dolat  }

\address
{\textit{Malak Alnimer, Department of Mathematics and
Statistics, Jordan University of Science and Technology, P.O.Box
3030, Irbid 22110, Jordan.}}
\bigskip
{\email{\textit{mfalnimer21@sci.just.edu.jo}}}

\address
{\textit{ Khaldoun Al-Zoubi , Department of Mathematics and
Statistics, Jordan University of Science and Technology, P.O.Box
3030, Irbid 22110, Jordan.}}
\bigskip
{\email{\textit{kfzoubi@just.edu.jo}}}

\address
{\textit{Mohammed Al-Dolat , Department of Mathematics and
Statistics, Jordan University of Science and Technology, P.O.Box
3030, Irbid 22110, Jordan.}}
\bigskip
{\email{\textit{mmaldolat@just.edu.jo}}}

 \subjclass[2010]{13A02, 16W50.}

\date{}
\begin{abstract}
 Let $\Gamma$ be a group, $\Re$ be a $\Gamma$-graded commutative ring with unity $1$ and $\Im$ a graded $\Re$-module. In this
paper, we introduce the concept of graded weakly $J_{gr}$-semiprime
submodules as a generalization of graded weakly semiprime
submodules. We study several results concerning of graded weakly $J_{gr}$%
-semiprime submodules. For example, we give a characterization of graded
weakly $J_{gr}$-semiprime submodules. Also, we find some relations between
graded weakly $J_{gr}$-semiprime submodules and graded weakly semiprime
submodules. In addition, the necessary and sufficient condition for graded
submodules to be graded weakly $J_{gr}$-semiprime submodules are
investigated. A proper graded submodule $U$ of $\Im$ is said to be a graded weakly
$J_{gr}$-semiprime submodule of $\Im$ if whenever $r_{g}\in
h(\Re),$ $m_{h}\in h(\Im)$ and $n\in
\mathbb{Z}
^{+}$ with $0\neq r_{g}^{n}m_{h}\in U$, then $r_{g}m_{h}\in U+J_{gr}(\Im)$, where $J_{gr}(\Im)$ is the graded Jacobson
radical of $\Im.$

 \end{abstract}
\keywords{graded weakly $J_{gr}$-semiprime submodule, graded $J_{gr}$-semiprime submodule, graded weakly semiprime submodule  \\
$*$ Corresponding author}
 \maketitle


\section{Introduction and Preliminaries}
    Throughout this work, we assume that $\Re $ is a commutative $\Gamma$-graded
    ring with identity and $\Im $ is a unitary graded $\Re $-module.

Let $\Gamma$ be a group. A ring $\Re $ is said to be a $\Gamma$-graded ring if there exist additive subgroups $\Re _{g}$ of $\Re $
indexed by the elements $g\in \Gamma$ with $\Re =\bigoplus_{g\in \Gamma}\Re _{g}$
and $\Re _{g}\Re _{h}\subseteq \Re _{gh}$ for all $g$, $h\in \Gamma$. We
set $h(\Re ):=\cup _{g\in \Gamma}\Re _{g}$. If $t\in \Re $, then $t$ can be
written uniquely as $\sum_{g\in \Gamma}t_{g}$, where $t_{g}$ is called a
homogeneous component of $t$ in $\Re _{g}$. Let $\Re =\bigoplus_{g\in \Gamma}\Re
_{g}$ be a $\Gamma$-graded ring. An ideal $L$ of $\Re $ is said to be a graded
ideal if $L=\bigoplus_{g\in \Gamma}(L\cap \Re _{g}):=\bigoplus_{g\in \Gamma}L_{g}$.
By $L\leq^{id} _{\Gamma}\Re $, we mean that $L$ is a graded ideal of $\Re $.
Also, by $L<^{id}_{\Gamma}\Re $, we mean that $L$ is a proper graded ideal of $
\Re$.
Let $\Re $ be a $\Gamma$-graded ring and $\Im $ an $\Re $-module. Then $%
\Im $ is a $\Gamma$-graded $\Re $-module if there exists a family of additive subgroups $\{\Im
_{g}\}_{g\in \Gamma}$ of $\Im $ with $\Im =\bigoplus_{g\in \Gamma}\Im _{g}$ and $%
\Re _{g}\Im _{h}\subseteq \Im _{gh}$ for all $g,h\in \Gamma$. We set $%
h(\Im ):=\cup _{g\in \Gamma\ }\Im _{g}.$ Let $\Im =\bigoplus_{g\in \Gamma}\Im _{g}$ be
a graded $\Re$-module. A submodule $U$ of $\Im $ is said to be \textit{a
graded submodule of }$M$ \ if $U=\bigoplus_{g\in \Gamma}(U\cap \Im
_{g}):=\bigoplus_{g\in \Gamma}U_{g}$. By $U\leq^{sub} _{\Gamma}\Im $, we mean that $U$ is a $\Gamma$-graded submodule of $\Im $.
Also, by $U<^{sub}_{\Gamma}\Im $, we mean that $U$ is a proper $\Gamma$-graded submodule of $%
\Im .$ These basic properties and more information on graded rings and modules can be found in \cite{H, NV1,NV2, NV3}. A $<^{sub}_{\Gamma}\Im $ is said to be \textit{a }$Gr$\textit{-maximal }if there is a $L\leq^{sub} _{\Gamma}\Im $ with $U\subseteq L\subseteq
\Im $, then $U=L$ or $L=\Im ,$ (see \cite{NV3}). The graded Jacobson
radical of a graded module $\Im $, denoted by $J_{gr}(\Im )$, is
defined to be the intersection of all $Gr$-maximal submodules of $\Im $ (if $%
\Im $ has no $Gr$-maximal submodule then we shall take, by definition, $%
J_{gr}(\Im )=\Im )$, (see \cite{NV3}).

\bigskip

The concept of graded semiprime submodules has been studied and introduced
by many authors (e.g. \cite{KAb,FGW,FG,LV}). A $U<^{sub}_{\Gamma}\Im $
is called \textit{a graded semiprime }(briefly, $Gr$-semiprime) \textit{%
submodule }if whenever $t_{g}\in h(\Re )$, $m_{h}\in h(\Im )$ and $n\in
\mathbb{Z}^{+}$ with $\ t_{g}^{n}m_{h}\in U$, then $t_{g}m_{h}\in U$, (see
\cite{FG}). In \cite{TZ}, the authors present graded weakly semiprime submodules, which
are generalized from graded semiprime submodules. A $U<^{sub}_{\Gamma}\Im $ is called \textit{a graded weakly semiprime
(briefly, }$Gr$\textit{-}$W$\textit{-semiprime) submodule }if whenever $%
t_{g}\in h(\Re )$, $m_{h}\in h(\Im )$ and $n\in \mathbb{Z}^{+}$ with $0\neq
\ t_{g}^{n}m_{h}\in U$, then $t_{g}m_{h}\in U$. Our paper introduces graded
weakly $J_{gr}$-semiprime submodules as a generalization of $Gr$-$W$%
-semiprime submodules. Several results concerning graded weakly $J_{gr}$%
-semiprime submodules will be discussed.
 \section{Results}

\begin{definition}
 A proper graded submodule $U$ of $\Im$ is said to be a graded weakly $J_{gr}$-semiprime (briefly, $Gr$-$W$-$%
J_{gr} $-semiprime) submodule of $\Im$ \ if whenever $0\neq
r_{g}^{n}m_{h}\in U $ where $r_{g}\in h(\Re)$, $m_{h}\in h(\Im)$ and $n\in
\mathbb{Z}
^{+}$, then $r_{g}m_{h}\in U+J_{gr}(\Im)$. In particular, a graded ideal $L$\ of $\Re $\ is said to be a
graded weakly $J_{gr}$-semiprime ideal of $\Re $ if  $L$ is a graded weakly
$J_{gr}$-semiprime submodule of the graded $\Re $-module $\Re $.
\end{definition}

It is clear that every $Gr$-$W$-semiprime submodule is a $Gr$-$W$-$J_{gr}$-semiprime submodule of $\Im$, but the converse is not true in general. This
is clear from the following example.
\begin{example}
Let $\Gamma=%
\mathbb{Z}
_{2}$ and $\Re =%
\mathbb{Z}
$ be a $\Gamma$-graded ring with $\Re _{0}=%
\mathbb{Z}
$, $\Re _{1}=\{0\}$. Then $\Im =%
\mathbb{Z}
_{24}$ is a graded $\Re $-module with $\Im _{0}=%
\mathbb{Z}
_{24}$ and $\Im _{1}=\{\overline{0}\}$. Let $U=\{\overline{0},\overline{8},%
\overline{16}\}$  $\leq^{sub} _{\Gamma} $ $%
\mathbb{Z}
_{24}$. Since $J_{gr}(%
\mathbb{Z}
_{24})=\langle \overline{2}\rangle \cap \langle \overline{3}\rangle =\langle
\overline{6}\rangle =\{\overline{0},\overline{6},\overline{12},\overline{18}%
\}$ and whenever $0\neq r^{k}m\in U$\ for $r\in h(%
\mathbb{Z}
)$, $m\in h(%
\mathbb{Z}
_{24})$, $k\in
\mathbb{Z}
^{+}$ implies that $rm\in U+J_{gr}(%
\mathbb{Z}
_{24})=\{\overline{0},\overline{8},\overline{16}\}+\{\overline{0},\overline{6%
},\overline{12},\overline{18}\}=\langle \overline{2}\rangle ,$ we have $U$
is a $Gr$-$W$-$J_{gr}$-semiprime submodule of $\Im $. How ever, $U$ is not $%
Gr$-$W$-semiprime submodule of $\Im $ since there exist $2\in h(%
\mathbb{Z}
)$, $\overline{2}\in h(%
\mathbb{Z}
_{24})$ and $2\in
\mathbb{Z}
^{+}$ such that $0\neq 2^{2}\cdot \overline{2}=\overline{8}\in U$ \ but $%
2\cdot \overline{2}=\overline{4}\notin U$.
\end{example}

Following are theorems that give some equivalent characterizations
of $Gr$-$W$-$J_{gr}$-semiprime submodule.

\begin{theorem}\label{th2.3}
Let $U<^{sub}_{\Gamma}\Im$. Then the following statements are equivalent.
  \begin{enumerate}[\upshape (i)]
\item  $U$ is a $Gr$-$W$-$J_{gr}$-semiprime submodule of $\Im$.
\item  $(U:_{\Im}\langle r_{g}^{n}\rangle )\subseteq (\langle 0\rangle
:_{\Im}\langle r_{g}^{n}\rangle )\cup (U+J_{gr}(\Im):_{\Im}\langle
r_{g}\rangle )$, for each $r_{g}\in h(\Re)$.
\item  Either $(U:_{\Im}\langle r_{g}^{n}\rangle )\subseteq (\langle 0\rangle
:_{\Im}\langle r_{g}^{n}\rangle )$ or $(U:_{\Im}\langle r_{g}^{n}\rangle
)\subseteq (U+J_{gr}(\Im):_{\Im}\langle r_{g}\rangle )$, for each $r_{g}\in
h(\Re)$.
\end{enumerate}
\end{theorem}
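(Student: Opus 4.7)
The plan is to establish the cycle (iii) $\Rightarrow$ (i) $\Rightarrow$ (ii) $\Rightarrow$ (iii). The last implication (ii) $\Rightarrow$ (iii) is the standard fact that a submodule contained in the set-theoretic union of two submodules must be contained in one of them, applied to the three colon-type submodules in the statement (each of which is graded, since $U$, $J_{gr}(\Im)$, $\langle r_{g}\rangle$, and $\langle r_{g}^{n}\rangle$ are graded).

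For (iii) $\Rightarrow$ (i), I would argue directly. Given homogeneous $r_{g}\in h(\Re)$, $m_{h}\in h(\Im)$, and $n\in \mathbb{Z}^{+}$ with $0\neq r_{g}^{n}m_{h}\in U$, one has $m_{h}\in (U:_{\Im}\langle r_{g}^{n}\rangle )$ but, because $r_{g}^{n}m_{h}\neq 0$, $m_{h}\notin (\langle 0\rangle :_{\Im}\langle r_{g}^{n}\rangle )$. Thus the first alternative of (iii) fails for this $r_{g}$, so the second must hold, placing $m_{h}\in (U+J_{gr}(\Im):_{\Im}\langle r_{g}\rangle )$, i.e.\ $r_{g}m_{h}\in U+J_{gr}(\Im)$.

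For the main implication (i) $\Rightarrow$ (ii), I would take $m\in (U:_{\Im}\langle r_{g}^{n}\rangle )$ and decompose $m=\sum_{i}m_{h_{i}}$ into its homogeneous components. Because the colon of a graded submodule by a graded ideal is graded, each $m_{h_{i}}$ again satisfies $r_{g}^{n}m_{h_{i}}\in U$. For each $i$, either $r_{g}^{n}m_{h_{i}}=0$, placing $m_{h_{i}}$ in $(\langle 0\rangle :_{\Im}\langle r_{g}^{n}\rangle )$; or $0\neq r_{g}^{n}m_{h_{i}}\in U$, in which case (i) forces $r_{g}m_{h_{i}}\in U+J_{gr}(\Im)$, placing $m_{h_{i}}$ in $(U+J_{gr}(\Im):_{\Im}\langle r_{g}\rangle )$.

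The hard part is then lifting these component-wise conclusions to $m$ itself, since the union $(\langle 0\rangle :_{\Im}\langle r_{g}^{n}\rangle )\cup (U+J_{gr}(\Im):_{\Im}\langle r_{g}\rangle )$ is not in general closed under addition. I expect to handle this by coupling the submodule-union lemma with the graded decomposition: if some homogeneous component lies outside $(\langle 0\rangle :_{\Im}\langle r_{g}^{n}\rangle )$, the resulting witness $m_{h}$ with $0\neq r_{g}^{n}m_{h}\in U$ should, after suitable homogeneous scaling to align degrees with any other component $m'_{h'}$ satisfying $r_{g}^{n}m'_{h'}=0$, produce a new homogeneous element to which (i) can be applied, propagating the conclusion $r_{g}m'_{h'}\in U+J_{gr}(\Im)$ and thereby forcing the uniform alternative required for (ii).
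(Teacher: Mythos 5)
Your (iii) $\Rightarrow$ (i) and (ii) $\Rightarrow$ (iii) steps are fine and coincide with the paper's. The problem is (i) $\Rightarrow$ (ii): the step you yourself label ``the hard part'' is never actually carried out, and the strategy you sketch for it does not work. The definition of a $Gr$-$W$-$J_{gr}$-semiprime submodule only constrains \emph{homogeneous} elements $m_{h}$ with $0\neq r_{g}^{n}m_{h}\in U$. If a component $m'_{h'}$ of $m$ satisfies $r_{g}^{n}m'_{h'}=0$, the hypothesis gives no information whatsoever about $r_{g}m'_{h'}$, and multiplying some other component $m_{h}$ by a homogeneous ring element to ``align degrees'' produces a new element of the form $s\,m_{h}$, not anything involving $m'_{h'}$; there is no mechanism by which (i) can be made to say something about $r_{g}m'_{h'}$. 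Concretely, nothing in (i) excludes the coexistence of a homogeneous $a$ with $r_{g}^{n}a=0$ and $r_{g}a\notin U+J_{gr}(\Im)$ and a homogeneous $b$ with $0\neq r_{g}^{n}b\in U$; then $a+b$ lies in $(U:_{\Im}\langle r_{g}^{n}\rangle)$ but $r_{g}^{n}(a+b)=r_{g}^{n}b\neq 0$ and $r_{g}(a+b)\notin U+J_{gr}(\Im)$, so $a+b$ lies in neither set on the right-hand side of (ii). Your outline gives no way to rule this out.

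It is worth noting how the paper handles this: it doesn't. The paper's proof of (i) $\Rightarrow$ (ii) takes $m_{h}\in (U:_{\Im}\langle r_{g}^{n}\rangle)\cap h(\Im)$, i.e.\ it only ever tests \emph{homogeneous} elements of the colon, splits into the cases $r_{g}^{n}m_{h}=0$ and $r_{g}^{n}m_{h}\neq 0$ exactly as in your component-wise analysis, and then asserts the full containment. In other words, the intended reading of (ii) (and of the union lemma in (ii) $\Rightarrow$ (iii)) is evidently at the level of homogeneous elements, in which case your component-wise argument already finishes the proof and the ``lifting'' step is unnecessary; but if (ii) is read as a genuine set-theoretic containment of submodules, then the missing step is a real gap --- one your proposal correctly identifies but does not close, and one the paper silently skips. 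Either way, you should not present the degree-alignment idea as the resolution: either drop the lifting entirely (homogeneous reading) or supply an actual argument for it, which the hypotheses as stated do not appear to support.
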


\begin{proof}
 $(i)\rightarrow (ii)$: Let $r_{g}\in h(\Re )$ and $m_{h}\in (U:_{\Im }\langle r_{g}^{n}\rangle
)\cap h(\Im ).$ Then $\langle r_{g}^{n}\rangle m_{h}\subseteq U$ and hence $%
r_{g}^{n}m_{h}\in U$. If $r_{g}^{n}m_{h}\neq 0,$ then$\ r_{g}m_{h}\in
U+J_{gr}(\Im )$ as $U$ is $Gr$-$W$-$J_{gr}$-semiprime submodule of $\Im $.
Hence $\langle r_{g}\rangle m_{h}\subseteq U+J_{gr}(\Im )$, it follows that $%
m_{h}\in (U+J_{gr}(\Im ):_{\Im }\langle r_{g}\rangle )$. Thus $m_{h}\in
(\langle 0\rangle :_{\Im }\langle r_{g}^{n}\rangle )\cup (U+J_{gr}(\Im
):_{\Im }\langle r_{g}\rangle )$. If $r_{g}^{n}m_{h}=0,$ then $\langle
r_{g}^{n}\rangle m_{h}\subseteq \{0\}$ and so $m_{h}\in (\langle 0\rangle
:_{\Im }\langle r_{g}^{n}\rangle )$. Hence $m_{h}\in (\langle 0\rangle :_{\Im
}\langle r_{g}^{n}\rangle )\cup (U+J_{gr}(\Im ):_{\Im }\langle r_{g}\rangle )
$. Therefore $(U:_{\Im }\langle r_{g}^{n}\rangle )\subseteq (\langle
0\rangle :_{\Im }\langle r_{g}^{n}\rangle )\cup (U+J_{gr}(\Im ):_{\Im
}\langle r_{g}\rangle )$.

$(ii)\rightarrow (iii)$: It is clear.

$(iii)\rightarrow (i)$: Let $r_{g}\in h(\Re )$, $m_{h}\in h(\Im )$ and $%
n\in
\mathbb{Z}
^{+}$ with $0\neq r_{g}^{n}m_{h}\in U$. Then $\{0\}\neq \langle
r_{g}^{n}\rangle m_{h}\subseteq U$, which implies that $m_{h}\in (U:\langle
r_{g}^{n}\rangle )$ and $m_{h}\notin (\langle 0\rangle :_{\Im }\langle
r_{g}^{n}\rangle ).$ Now by (iii), we get $m_{h}\in (U+J_{gr}(\Im ):_{\Im
}\langle r_{g}\rangle )$ and so $r_{g}m_{h}\in U+J_{gr}(\Im )$. Therefore, $U
$ is a $Gr$-$W$-$J_{gr}$-semiprime submodule of $\Im $.
\end{proof}

\begin{theorem}\label{th2.4}
 Let $%
U<^{sub}_{\Gamma}\Im $. Then the following statements are equivalent.
 \begin{enumerate}[\upshape (i)]
\item $U$ is a $Gr$-$W$-$J_{gr}$-semiprime submodule of $\Im$.

\item  For every $K\leq^{sub} _{\Gamma}\Im,$ $r_{g}\in h(\Re)$ and $n\in
\mathbb{Z}
^{+}$ with $\{0\}\neq \langle r_{g}\rangle ^{n}K\subseteq U,$ then $\langle
r_{g}\rangle K\subseteq U+J_{gr}(\Im)$.
 \end{enumerate}
\end{theorem}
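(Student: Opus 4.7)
The plan is to obtain both implications by piggy‑backing on Theorem \ref{th2.3}, which already gives a clean dichotomy characterization in terms of colon submodules. The key algebraic observation underlying everything is that, since $\Re$ is commutative with unity and $r_{g}$ is homogeneous, the ideal product $\langle r_{g}\rangle^{n}$ coincides with $\langle r_{g}^{n}\rangle=\Re r_{g}^{n}$; this lets me translate between the ``submodule/ideal'' formulation of (ii) and the ``element'' formulations used in Theorem \ref{th2.3}.

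For $(i)\Rightarrow(ii)$: I would start from a graded submodule $K\leq^{sub}_{\Gamma}\Im$, a homogeneous $r_{g}\in h(\Re)$ and $n\in\mathbb{Z}^{+}$ satisfying $\{0\}\neq\langle r_{g}\rangle^{n}K\subseteq U$. The inclusion $\langle r_{g}\rangle^{n}K\subseteq U$ rewrites as $K\subseteq (U:_{\Im}\langle r_{g}^{n}\rangle)$. Applying Theorem \ref{th2.3}(iii), one of two colon inclusions must hold. The first alternative, $(U:_{\Im}\langle r_{g}^{n}\rangle)\subseteq(\langle 0\rangle:_{\Im}\langle r_{g}^{n}\rangle)$, would force $\langle r_{g}^{n}\rangle K\subseteq\{0\}$, contradicting $\langle r_{g}\rangle^{n}K\neq\{0\}$. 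So the second alternative $(U:_{\Im}\langle r_{g}^{n}\rangle)\subseteq(U+J_{gr}(\Im):_{\Im}\langle r_{g}\rangle)$ must hold, which immediately yields $\langle r_{g}\rangle K\subseteq U+J_{gr}(\Im)$.

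For $(ii)\Rightarrow(i)$: given $r_{g}\in h(\Re)$, $m_{h}\in h(\Im)$ and $n\in\mathbb{Z}^{+}$ with $0\neq r_{g}^{n}m_{h}\in U$, I would specialise (ii) to the cyclic graded submodule $K:=\Re m_{h}$, which is graded because $m_{h}$ is homogeneous. Then $\langle r_{g}\rangle^{n}K=\Re r_{g}^{n}m_{h}$ is nonzero (it contains $r_{g}^{n}m_{h}\neq0$) and is contained in $U$ (because $r_{g}^{n}m_{h}\in U$ and $U$ is a submodule). Hypothesis (ii) then delivers $\langle r_{g}\rangle K\subseteq U+J_{gr}(\Im)$, and evaluating at $m_{h}$ gives $r_{g}m_{h}\in U+J_{gr}(\Im)$, as required.

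I do not anticipate any serious obstacle: the only things that need to be checked carefully are the identity $\langle r_{g}\rangle^{n}=\langle r_{g}^{n}\rangle$ (ensuring that the hypothesis of (ii) is exactly the condition $K\subseteq(U:_{\Im}\langle r_{g}^{n}\rangle)$ combined with nontriviality) and that $\Re m_{h}$ is a graded submodule, both of which are immediate in the graded commutative setting with identity. The slightly subtle point to highlight in the write‑up is that the ``$\{0\}\neq$'' clause in (ii) is precisely what excludes the first branch of the Theorem \ref{th2.3} dichotomy; without it the implication $(i)\Rightarrow(ii)$ would fail.
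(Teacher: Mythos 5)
Your proposal is correct and follows essentially the same route as the paper: the implication $(i)\Rightarrow(ii)$ is obtained from the dichotomy in Theorem \ref{th2.3}(iii) by observing that $K\subseteq (U:_{\Im}\langle r_{g}^{n}\rangle)$ while $K\nsubseteq(\langle 0\rangle:_{\Im}\langle r_{g}^{n}\rangle)$, and $(ii)\Rightarrow(i)$ is obtained by specialising $K$ to the cyclic graded submodule $\langle m_{h}\rangle=\Re m_{h}$. Your explicit remarks on the identity $\langle r_{g}\rangle^{n}=\langle r_{g}^{n}\rangle$ and on the role of the ``$\{0\}\neq$'' clause are sound and, if anything, make the argument slightly more careful than the paper's own write-up.
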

\begin{proof}

$(i)\Rightarrow (ii)$ Let $K$ $\leq^{sub} _{\Gamma}\Im,$ $r_{g}\in h(\Re)$ and $n\in
\mathbb{Z}
^{+}$ with $\{0\}\neq \langle r_{g}\rangle ^{n}K\subseteq U.$ This implies
that, $K\subseteq (U:_{\Im}\langle r_{g}^{n}\rangle )$ and $K\nsubseteq
(\langle 0\rangle :_{\Im}\langle r_{g}^{n}\rangle ).$ Since $U$ is a $Gr$-$W$%
-$J_{gr}$-semiprime submodule of $\Im$, by Theorem \ref{th2.3}, we have $%
K\subseteq $ $(U:_{\Im}\langle r_{g}^{n}\rangle )\subseteq
(U+J_{gr}(\Im):_{\Im}\langle r_{g}\rangle ),$ hence $\langle r_{g}\rangle
K\subseteq U+J_{gr}(\Im)$.

$(ii)\Rightarrow (i)$ Let $0\neq r_{g}^{n}m_{h}\in U$ where $r_{g}\in
h(\Re )$, $m_{h}\in h(\Im )$ and $n\in
\mathbb{Z}
^{+}.$ Then $\{0\}\neq \langle r_{g}\rangle ^{n}\langle m_{h}\rangle \subseteq U$%
. Now by (ii), we have $\langle r_{g}\rangle \langle m_{h}\rangle \subseteq
U+J_{gr}(\Im ),$ it follows that $r_{g}m_{h}\in U+J_{gr}(\Im ).$
Therefore, $U$ is a $Gr$-$W$-$J_{gr}$-semiprime submodule of $\Im $
\end{proof}
The following corollaries follow directly from Theorem \ref{th2.4}.
\begin{corollary}\label{co2.5}
Let $U<^{sub}_{\Gamma}\Im.$ Then $U$ is $Gr$%
-$W$-$J_{gr}$-semiprime submodule of $\Im$ if and only if for every $%
r_{g}\in h(\Re)$, and $n\in
\mathbb{Z}
^{+}$ with $\{0\}\neq \langle r_{g}^{n}\rangle \Im\subseteq U$, then $%
\langle r_{g}\rangle \Im\subseteq U+J_{gr}(\Im)$.
\end{corollary}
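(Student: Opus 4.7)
The plan is to obtain Corollary \ref{co2.5} as the specialization of Theorem \ref{th2.4} to the single graded submodule $K=\Im$. Two preliminary observations make this one-step reduction clean: $\Im$ is trivially a graded submodule of itself, and since $\Re$ is commutative the principal ideals satisfy $\langle r_{g}^{n}\rangle=\langle r_{g}\rangle^{n}$ for every $r_{g}\in h(\Re)$ and $n\in\mathbb{Z}^{+}$. With these identifications, both the hypothesis $\{0\}\neq\langle r_{g}^{n}\rangle\Im\subseteq U$ and the conclusion $\langle r_{g}\rangle\Im\subseteq U+J_{gr}(\Im)$ stated in the corollary are exactly the hypothesis and conclusion of Theorem \ref{th2.4}(ii) taken at $K=\Im$.

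For the forward direction, I would assume that $U$ is a $Gr$-$W$-$J_{gr}$-semiprime submodule and simply invoke $(i)\Rightarrow(ii)$ of Theorem \ref{th2.4} with $K=\Im$; the stated conclusion then drops out immediately, with no further work beyond the two identifications recorded above.

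For the converse, I would start from the $K=\Im$ condition and aim to verify the element-wise definition of $Gr$-$W$-$J_{gr}$-semiprime. Given $0\neq r_{g}^{n}m_{h}\in U$ with $r_{g}\in h(\Re)$ and $m_{h}\in h(\Im)$, the natural move is to observe $r_{g}^{n}m_{h}\in\langle r_{g}^{n}\rangle\Im$ and then imitate the cyclic-submodule step used in the proof of Theorem \ref{th2.4}$(ii)\Rightarrow(i)$ to extract $r_{g}m_{h}\in U+J_{gr}(\Im)$. The main care point I anticipate is bridging the gap between \emph{``$r_{g}^{n}m_{h}\in U$''} and the stronger \emph{``$\langle r_{g}^{n}\rangle\Im\subseteq U$''} that the corollary's hypothesis formally requires: this is the only substantive obstacle, and I expect to handle it by a short reduction paralleling the $(ii)\Rightarrow(i)$ argument of Theorem \ref{th2.4} rather than by any new technique.
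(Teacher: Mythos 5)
Your forward direction is exactly what the paper intends: the paper gives no written argument beyond the remark that the corollary ``follows directly from Theorem~\ref{th2.4}'', and specializing Theorem~\ref{th2.4}(i)$\Rightarrow$(ii) to $K=\Im$ (together with the identification $\langle r_{g}^{n}\rangle=\langle r_{g}\rangle^{n}$) is precisely that specialization. That half of your proposal is correct and complete.

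The converse is where there is a genuine gap, and you have in fact put your finger on it yourself without closing it: from $0\neq r_{g}^{n}m_{h}\in U$ there is no route to the hypothesis $\{0\}\neq\langle r_{g}^{n}\rangle\Im\subseteq U$ of the corollary's condition, and the ``short reduction paralleling $(ii)\Rightarrow(i)$'' you hope for does not exist. The $(ii)\Rightarrow(i)$ step of Theorem~\ref{th2.4} works only because condition (ii) quantifies over \emph{all} graded submodules $K$, so one may take $K=\langle m_{h}\rangle$; once $K$ is frozen at $\Im$, the condition constrains only those $r_{g}$ with $r_{g}^{n}\Im\subseteq U$, i.e.\ it is a statement about $(U:_{\Re}\Im)$ and carries no information about individual elements $m_{h}$. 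Indeed the ``if'' direction is false as stated: take $\Gamma=\mathbb{Z}_{2}$, $\Re=\mathbb{Z}$ with $\Re_{0}=\mathbb{Z}$, $\Re_{1}=\{0\}$, $\Im=\mathbb{Z}\oplus\mathbb{Z}$ concentrated in degree $0$, and $U=4\mathbb{Z}\oplus\{0\}$. Then $J_{gr}(\Im)=\{0\}$, and no nonzero $\langle r^{n}\rangle\Im=r^{n}\mathbb{Z}\oplus r^{n}\mathbb{Z}$ can sit inside $U$, so the displayed condition holds vacuously; yet $0\neq 2^{2}(1,0)=(4,0)\in U$ while $2(1,0)=(2,0)\notin U+J_{gr}(\Im)=U$, so $U$ is not a $Gr$-$W$-$J_{gr}$-semiprime submodule. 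Thus only the ``only if'' half of the corollary is recoverable by your (or the paper's) reduction; the other half requires either strengthening the hypothesis back to arbitrary $K$ (which is just Theorem~\ref{th2.4}) or abandoning the biconditional.
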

\begin{corollary}\label{co2.6}
Let $U<^{sub}_{\Gamma}\Im.$ Then $U$ is $%
Gr $-$W$-$J_{gr}$-semiprime submodule of $\Im$ if and only if for every $%
r_{g}\in h(\Re)$, $K\leq^{sub} _{\Gamma}\Im$ and $n\in
\mathbb{Z}
^{+}$ with $\{0\}\neq r_{g}^{n}K\subseteq U$, then $r_{g}K\subseteq
U+J_{gr}(\Im)$.
\end{corollary}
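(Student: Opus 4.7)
The plan is to reduce Corollary \ref{co2.6} to a direct restatement of Theorem \ref{th2.4} by rewriting the ideal-generated products in terms of the element $r_g$ itself. Since $\Re$ is a commutative ring with identity, the graded ideal generated by the homogeneous element $r_g$ is simply $\langle r_g\rangle = \Re r_g$. Taking powers gives $\langle r_g\rangle^n = \Re r_g^n$, so for any graded submodule $K$ one has
\[
\langle r_g\rangle^n K \;=\; \Re r_g^n K \;=\; r_g^n (\Re K) \;=\; r_g^n K,
\]
using $\Re K = K$ because $1\in\Re$. The same calculation with $n=1$ yields $\langle r_g\rangle K = r_g K$.

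With these two identifications in hand, the condition in Corollary \ref{co2.6}, namely $\{0\}\neq r_g^n K\subseteq U$ implying $r_g K\subseteq U+J_{gr}(\Im)$, is literally the condition in part (ii) of Theorem \ref{th2.4}. Therefore I would prove the ($\Rightarrow$) direction by assuming $U$ is $Gr$-$W$-$J_{gr}$-semiprime, taking any triple $(K,r_g,n)$ with $\{0\}\neq r_g^n K\subseteq U$, rewriting the hypothesis as $\{0\}\neq\langle r_g\rangle^n K\subseteq U$, applying Theorem \ref{th2.4}(ii) to obtain $\langle r_g\rangle K\subseteq U+J_{gr}(\Im)$, and finally rewriting as $r_g K\subseteq U+J_{gr}(\Im)$. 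The ($\Leftarrow$) direction is symmetric: the Corollary's condition implies Theorem \ref{th2.4}(ii) verbatim after the same substitutions, which in turn yields (i) of that theorem.

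The only step requiring any care is verifying $\langle r_g\rangle^n K = r_g^n K$, which relies on commutativity and the presence of an identity in $\Re$; the homogeneity of $r_g$ guarantees that $\langle r_g\rangle$ is a graded ideal so that the identification is compatible with the graded structure used by Theorem \ref{th2.4}. I do not foresee any substantial obstacle — the corollary is essentially a notational repackaging of the theorem.
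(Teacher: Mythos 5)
Your proposal is correct and matches the paper's intent: the paper gives no separate proof, simply stating that Corollary \ref{co2.6} follows directly from Theorem \ref{th2.4}, and the identification $\langle r_g\rangle^n K = r_g^n K$ (via $\Re K = K$ for a unitary module over a commutative ring with identity) is exactly the reduction being invoked. Your explicit verification of that identification is a welcome addition but does not change the route.
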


\begin{theorem}\label{th2.7}
 Let $U$ be a $Gr$-$W$-$J_{gr}$%
-semiprime submodule of $\Im$ with $J_{gr}(\Im)\subseteq U.$ Then $U$ is a $%
Gr$-$W$-semiprime submodule of $\Im$.
\end{theorem}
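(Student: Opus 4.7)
The plan is to unwind the two definitions directly and observe that the hypothesis $J_{gr}(\Im) \subseteq U$ collapses the conclusion of the weaker property into the conclusion of the stronger one. There is essentially no obstacle here; the argument is a one-line reduction.

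First, I take the defining setup of a $Gr$-$W$-semiprime submodule: let $r_g \in h(\Re)$, $m_h \in h(\Im)$ and $n \in \mathbb{Z}^+$ with $0 \neq r_g^{n} m_h \in U$. My goal is to show $r_g m_h \in U$.

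Next, I invoke the hypothesis that $U$ is $Gr$-$W$-$J_{gr}$-semiprime, which applied to the same $r_g$, $m_h$, and $n$ yields $r_g m_h \in U + J_{gr}(\Im)$. The assumption $J_{gr}(\Im) \subseteq U$ gives $U + J_{gr}(\Im) = U$, so in fact $r_g m_h \in U$. Since $r_g, m_h, n$ were arbitrary, $U$ is $Gr$-$W$-semiprime, as required. The entire proof is thus a direct consequence of the definitions together with the containment $J_{gr}(\Im) \subseteq U$, so no auxiliary lemma from the earlier excerpt (such as Theorem \ref{th2.3} or Theorem \ref{th2.4}) is needed.
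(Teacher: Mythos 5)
Your proof is correct and is essentially identical to the paper's: both apply the $Gr$-$W$-$J_{gr}$-semiprime condition to the given $r_g, m_h, n$ and then use $J_{gr}(\Im)\subseteq U$ to conclude $U+J_{gr}(\Im)=U$, hence $r_g m_h\in U$. No further comment is needed.
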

 \begin{proof}
 Let $r_{g}\in h(\Re )$, $m_{h}\in h(\Im )$ and $n\in
\mathbb{Z}
^{+}$ with $0\neq r_{g}^{n}m_{h}\in U.$ Since $U$ is a $Gr$-$W$-$J_{gr}$%
-semiprime submodule of $\Im $ and $J_{gr}(\Im)\subseteq U$, we have $r_{g}m_{h}\in U+J_{gr}(\Im )=U$.
Therefore, $U$ is a $Gr$-$W$-semiprime submodule of $\Im $.
 \end{proof}

Let $\Re $ be a $\Gamma $-graded ring and $\Im $, $\Im ^{^{\prime }}$ be two
graded $\Re $-modules. Let $\varphi :\Im \rightarrow \Im ^{^{\prime }}$ be
an $\Re $-module homomorphsim. Then $\varphi $ is said to be a graded
homomorphsim if $\varphi (\Im _{g})\subseteq \Im _{g}^{^{\prime }}$ for all $%
g\in \Gamma $, see \cite{NV3}.

\begin{theorem}\label{th2.8}
Let $U<_{\Gamma }^{sub}\Im $. If $J_{gr}(\Im /U)=\{U\}$, then $J_{gr}(\Im
)\subseteq U$.
\end{theorem}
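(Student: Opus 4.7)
The plan is to reduce the statement to the standard correspondence between graded maximal submodules of $\Im/U$ and those graded maximal submodules of $\Im$ that contain $U$. First I would invoke this correspondence: if $N$ is a $Gr$-maximal submodule of $\Im$ containing $U$, then $N/U$ is a $Gr$-maximal submodule of $\Im/U$, and conversely every $Gr$-maximal submodule of $\Im/U$ has the form $N/U$ for a uniquely determined such $N$. This is the graded analogue of the usual lattice isomorphism between submodules of $\Im/U$ and submodules of $\Im$ containing $U$, and it respects the grading because the canonical projection is a graded homomorphism.

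Next, I would translate the hypothesis $J_{gr}(\Im/U) = \{U\}$. Assuming $\Im/U$ admits at least one $Gr$-maximal submodule, this equality says that the intersection of all such $N_i/U$ is the zero submodule of $\Im/U$, which by the correspondence is equivalent to $\bigcap_i N_i = U$, where $\{N_i\}$ is the family of all $Gr$-maximal submodules of $\Im$ containing $U$. Since $J_{gr}(\Im)$ is defined as the intersection over the (possibly larger) family of all $Gr$-maximal submodules of $\Im$, I get the inclusion
\[
J_{gr}(\Im) \;\subseteq\; \bigcap_i N_i \;=\; U,
\]
which is the desired conclusion.

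Finally, I would handle the degenerate case where $\Im/U$ has no $Gr$-maximal submodule. In that situation the convention in the preliminaries forces $J_{gr}(\Im/U) = \Im/U$, so the hypothesis $J_{gr}(\Im/U) = \{U\}$ collapses to $\Im/U = \{U\}$, i.e.\ $U = \Im$; then $J_{gr}(\Im) \subseteq \Im = U$ trivially. The only subtle point — and the step I expect to be the main obstacle to making fully rigorous — is verifying that the classical correspondence theorem passes cleanly to the $Gr$-maximal setting, i.e.\ that $N$ is $Gr$-maximal in $\Im$ if and only if $N/U$ is $Gr$-maximal in $\Im/U$; once this is in hand, the rest is a one-line intersection argument.
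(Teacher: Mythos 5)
Your proof is correct and follows essentially the same route as the paper's: both rest on the correspondence between $Gr$-maximal submodules of $\Im/U$ and $Gr$-maximal submodules of $\Im$ containing $U$, after which the inclusion $J_{gr}(\Im)\subseteq \bigcap_i N_i = U$ is immediate (the paper phrases this last step via $f(J_{gr}(\Im))\subseteq J_{gr}(\Im/U)=\{U\}$ and $\ker f = U$, which is the same observation). Your version is in fact slightly more complete, since you also dispose of the degenerate case in which $\Im/U$ has no $Gr$-maximal submodule, a case the paper's proof does not address.
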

\begin{proof}
 Define $\varphi :\Im \rightarrow \Im /U$ be a graded homeomorphism given by $%
\varphi (x)=x+U$ for all $x\in h(\Im )$, by \cite[ Theorem 2.12 (i)]{KAl}, $%
\varphi (J_{gr}(\Im ))\subseteq J_{gr}(\Im /U)$. Since $J_{gr}(\Im /U)=\{U\}$%
, then $\{U\}\subseteq \varphi (J_{gr}(\Im ))\subseteq \{U\}$, so we have $%
\varphi (J_{gr}(\Im ))=\{U\}$, thus $J_{gr}(\Im )\subseteq Ker\varphi =U$.
\end{proof}
\begin{corollary}\label{co2.9}
Let $U$ be a $Gr$-$W$-$J_{gr}$-semiprime submodule of $\Im$ with $J_{gr}(\frac{\Im}{U})=\{U\}$, then $U$
is a $Gr$-$W$-semiprime submodule of $\Im$.
\end{corollary}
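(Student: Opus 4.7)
The plan is to obtain the corollary as an immediate two-step consequence of Theorem \ref{th2.8} followed by Theorem \ref{th2.7}; no fresh element-level argument should be required. First I would observe that since $U$ is assumed to be a $Gr$-$W$-$J_{gr}$-semiprime submodule of $\Im$, it is in particular a graded submodule with $U \leq^{sub}_{\Gamma} \Im$, so Theorem \ref{th2.8} is applicable. Feeding the hypothesis $J_{gr}(\Im/U) = \{U\}$ into that theorem then yields the containment $J_{gr}(\Im) \subseteq U$.

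Next I would feed this containment, together with the standing hypothesis that $U$ is $Gr$-$W$-$J_{gr}$-semiprime, into Theorem \ref{th2.7}. Its statement requires exactly these two ingredients, and its conclusion is precisely what we wish to establish, namely that $U$ is a $Gr$-$W$-semiprime submodule of $\Im$. The proof therefore reduces to a one-sentence chain: Theorem \ref{th2.8} gives $J_{gr}(\Im) \subseteq U$, and then Theorem \ref{th2.7} upgrades $U$ from $Gr$-$W$-$J_{gr}$-semiprime to $Gr$-$W$-semiprime.

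There is essentially no obstacle here, since both implications have already been proved in the paper; the corollary is simply their composition. The only point worth checking is that the hypotheses of the two cited results match ours verbatim, and they do: Theorem \ref{th2.8} needs only $U \leq^{sub}_{\Gamma} \Im$ plus the vanishing $J_{gr}(\Im/U) = \{U\}$, and Theorem \ref{th2.7} needs $U$ to be a $Gr$-$W$-$J_{gr}$-semiprime submodule together with $J_{gr}(\Im) \subseteq U$. Both are in hand, so no further verification of the defining condition $0 \neq r_g^n m_h \in U \Rightarrow r_g m_h \in U$ is needed at the element level.
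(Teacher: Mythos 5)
Your proposal is correct and matches the paper's own argument exactly: the paper also derives $J_{gr}(\Im)\subseteq U$ from Theorem \ref{th2.8} using the hypothesis $J_{gr}(\Im/U)=\{U\}$, and then applies Theorem \ref{th2.7} to conclude that $U$ is $Gr$-$W$-semiprime. No further comment is needed.
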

\begin{proof}
This is clear by Theorem \ref{th2.8} and Theorem \ref{th2.7}.
\end{proof}

A graded $\Re $-module $\Im $ is a graded semisimple ($Gr$-semisimple) if
and only if every graded submodule $U$ of $\Im $ is a direct summand. That
is $\Im $ is a $Gr$-semisimple$\ $\ if and only if  for every graded
submodule $U$ of $\Im $ there exists $L$ a graded submodule of $\Im $ such
that $\Im =U\oplus L$. 

A graded submodule $U$ is called a graded small ($Gr$-small) if $\Im=U+V$ for
  $V$ $\leq^{sub} _{\Gamma}$ $\Im$ implies that $V=\Im$, see \cite{KQ}. 
\begin{theorem} \label{th2.10}
 Let $\Im $ be a $Gr$-semisimple $\Re $-module and $U$ be a $Gr$-$W$-$J_{gr}$-semiprime\
submodule of $\Im $. Then $U$ is a $Gr$-$W$-semiprime submodule of $\Im $.
\end{theorem}
\begin{proof}
 Let $\Im $ be a $Gr$-semisimple $\Re $-module, then every graded
submodule of $\Im $ is a direct summand. Thus the only $Gr$-small sumodule
of $\Im $ is $\{0\}$, it follows that $J_{gr}(\Im )=\sum \{S:S$ is a $Gr$-small submodule of $\Im \}=\{0\}\subseteq U$ by \cite[Theorem 2.10]{KAl}. Since $U$ is a $Gr$-$W$-$%
J_{gr}$-semiprime\ submodule of $\Im $, by Theorem \ref{th2.7}, then $U$ is a $Gr$-$W
$-semiprime submodule of $\Im $.
\end{proof}
Recall from \cite{A} that a graded module $\Im$
is said to be graded torsion ($Gr$-torsion) free $\Re$-module if whenever $r_{g}m_{h}=0$ where $r_{g}\in h(\Re )$ and $%
m_{h}\in $h($\Im$), implies that either $m_{h}=0$ or $r_{g}=0$.

\begin{theorem} \label{th2.12}
Let $\Im$ be a $Gr$-torsion free $\Re$%
-module and $U\leq^{sub} _{\Gamma}\Im$ with $J_{gr}(\frac{\Im}{U})=\{U\}$. Then $U$ is
a $Gr $-$W$-$J_{gr}$-semiprime submodule of $\Im$ if and only if for any
nonzero $L$ $\leq^{id} _{\Gamma}$ $\Re$, $(U:_{\Im}L)$ is a $Gr$-$W$-$J_{gr}$%
-semiprime submodule of $\Im$.
\end{theorem}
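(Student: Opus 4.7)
The $(\Leftarrow)$ direction should be immediate: the graded ideal $L = \Re$ is nonzero, and since $1 \in \Re$ we have $(U:_{\Im}\Re) = U$, so specializing the hypothesis to $L = \Re$ yields that $U$ itself is $Gr$-$W$-$J_{gr}$-semiprime.

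For $(\Rightarrow)$, the plan is first to upgrade the starting hypothesis using earlier results of the paper. Since $J_{gr}(\Im/U)=\{U\}$, Theorem \ref{th2.8} gives $J_{gr}(\Im) \subseteq U$, and then Corollary \ref{co2.9} promotes $U$ from $Gr$-$W$-$J_{gr}$-semiprime to the strictly stronger $Gr$-$W$-semiprime. I will actually prove the stronger claim that $(U:_{\Im}L)$ is $Gr$-$W$-semiprime for every nonzero $L \leq^{id}_{\Gamma} \Re$, which a fortiori makes it $Gr$-$W$-$J_{gr}$-semiprime. Fix such an $L$ and suppose $r_g \in h(\Re)$, $m_h \in h(\Im)$, $n \in \mathbb{Z}^+$ satisfy $0 \neq r_g^n m_h \in (U:_{\Im}L)$, so that $\ell r_g^n m_h \in U$ for every homogeneous $\ell \in L$. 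For a nonzero homogeneous $\ell \in L$, the $Gr$-torsion free hypothesis combined with $r_g^n m_h \neq 0$ forces $\ell r_g^n m_h = r_g^n(\ell m_h) \neq 0$; then $Gr$-$W$-semiprimeness of $U$ applied to the homogeneous element $\ell m_h \in h(\Im)$ yields $\ell r_g m_h = r_g(\ell m_h) \in U$. The case $\ell = 0$ is trivial, and using the graded decomposition $L = \bigoplus_{\gamma \in \Gamma}(L \cap \Re_\gamma)$ together with additivity, one concludes $L r_g m_h \subseteq U$, i.e., $r_g m_h \in (U:_{\Im}L) \subseteq (U:_{\Im}L) + J_{gr}(\Im)$.

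The main obstacle is the careful use of $Gr$-torsion freeness: it is precisely this hypothesis that preserves the nonzero-ness required to invoke $Gr$-$W$-semiprimeness of $U$, for without it, $\ell r_g^n m_h$ could vanish even when both $\ell$ and $r_g^n m_h$ are nonzero, and the chain of implications would break down. A minor side point is the properness of $(U:_{\Im}L)$, which is implicit in the paper's convention and holds whenever $L\Im \not\subseteq U$; in the degenerate case $L\Im \subseteq U$ the statement is vacuous.
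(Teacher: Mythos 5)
Your proof is correct, and it reaches the conclusion by a mildly different route than the paper's. The paper applies its submodule criterion (Theorem \ref{th2.4}) to the graded submodule $K=Lm_{h}$: from $\{0\}\neq \langle r_{g}\rangle ^{n}(Lm_{h})\subseteq U$ (nonvanishing secured by torsion-freeness exactly as in your argument, via a single nonzero homogeneous $i\in L$) it deduces $\langle r_{g}\rangle (Lm_{h})\subseteq U+J_{gr}(\Im)$, and only afterwards invokes Theorem \ref{th2.8} to absorb $J_{gr}(\Im)$ into $U$ and pass back to $(U:_{\Im}L)$. You instead absorb the radical first: Theorem \ref{th2.8} together with Corollary \ref{co2.9} upgrades $U$ to $Gr$-$W$-semiprime at the outset, after which you argue element-wise over homogeneous $\ell\in L$ and obtain the strictly stronger conclusion that $(U:_{\Im}L)$ is $Gr$-$W$-semiprime. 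Your version makes it transparent that, under the hypothesis $J_{gr}(\Im/U)=\{U\}$, the $J_{gr}$ relaxation does no real work in the forward direction, which is a small gain in clarity; the paper's version stays inside the $J_{gr}$-semiprime framework and reuses the already-established Theorem \ref{th2.4}. The backward directions are identical (take $L=\Re$). Finally, both your proof and the paper's quietly assume $(U:_{\Im}L)$ is proper, i.e.\ $L\Im\not\subseteq U$; you are right to flag this degenerate case, which the paper does not address, and it is a defect of the theorem statement rather than of either argument.
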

\begin{proof}
$(\Longrightarrow )$ Let $0\neq L$ $\leq^{id} _{\Gamma}$ $%
\Re $, $m_{h}\in h(\Im )$, $r_{g}\in h(\Re )$  and $n\in
\mathbb{Z}
^{+}$ with $0\neq r_{g}^{n}m_{h}\in (U:_{\Im }L).$ Then $\{0\}\neq \langle
r_{g}^{n}\rangle m_{h}\subseteq (U:_{\Im }L),$ and hence $\langle
r_{g}^{n}\rangle (Lm_{h})\subseteq U$. If $\langle r_{g}^{n}\rangle
(Lm_{h})=\{0\},$ so there exists $0\neq i\in L\cap h(\Re )$ with $%
\langle r_{g}^{n}\rangle im_{h}=\{0\},$ so $i\cdot r_{g}^{n}m_{h}=0$. Hence $%
r_{g}^{n}m_{h}=0$ as $\Im $ is a $Gr$-torsion free $\Re $-module, which is a
contradiction. So assume that $\langle r_{g}\rangle ^{n}(Lm_{h})=\langle
r_{g}^{n}\rangle (Lm_{h})\neq \{0\}.$ Since $U$ is a $Gr$-$W$-$J_{gr}$%
-semiprime submodule of $\Im ,$  by Theorem \ref{th2.4},  $\langle
r_{g}\rangle (Lm_{h})\subseteq U+J_{gr}(\Im ).$ But $J_{gr}(\frac{\Im }{U}%
)=\{U\}$, by Theorem \ref{th2.8}, we have $J_{gr}(\Im )\subseteq U$ so $%
\langle r_{g}\rangle (Lm_{h})\subseteq U.$ This implies that, $\langle
r_{g}\rangle m_{h}\subseteq (U:_{\Im }L)\subseteq (U:_{\Im }L)+J_{gr}(\Im )$
and hence $r_{g}m_{h}\in (U:_{\Im }L)\subseteq (U:_{\Im }L)+J_{gr}(\Im
).$ Therefore, $(U:_{\Im }L)$ is a $Gr$-$W$-$J_{gr}$-semiprime submodule of $%
\Im $.

$(\Longleftarrow )$ Assume that $(U:_{\Im }L)$ is a $Gr$-$W$-$J_{gr}$%
-semiprime submodule of $\Im $ for any nonzero $L$ $\leq^{id} _{\Gamma}$ $\Re $.
Put $\Re =L,$ then $U=$ $(U:_{\Im }\Re )$ is a $Gr$-$W$-$J_{gr}$-semiprime
submodule of $\Im .$
\end{proof}


Recall from \cite{ET} that a graded $\Re $-module $%
\Im $ is called a graded multiplication module ($Gr$-multiplication module)
if for every  $U$ $\leq^{sub} _{\Gamma}$ $\Im $ there exists a  $K$ $\leq^{id} _{\Gamma}$
$\Re $ such that $U=K\Im .$ If $\Im $ is $Gr$-multiplication $\Re $-module, $U=(U:_{\Re }\Im )\Im $, for
every $U$ $\leq^{sub} _{\Gamma}$ $\Im$.
 
 The set of all homogeneous zero divisors of $\Re $ is $G$-$Z(\Re )=\{r\in
h(\Re ):$ $rs=0$ for some $0\neq s\in h(\Re )\}$, and the set of all
homogeneous regular element is $G$-$C(\Re )=\{c\in
h(\Re ):c\notin G$-$Z(\Re )\}=\{c\in h(\Re ):cr\neq 0$ for all $0\neq r\in
h(\Re )\}$. It is clear that $\Im $ is a $Gr$-torsion free if and only if  $%
cm\neq 0$ for all $c\in G$-$C(\Re )$ and $0\neq m\in h(\Im )$.

\begin{theorem} \label{th2.13}
Every faithful $Gr$-multiplication $\Re $-module is a $Gr$-torsion free.
\end{theorem}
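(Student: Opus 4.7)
The plan is to take arbitrary $r_{g}\in h(\Re)$ and $m_{h}\in h(\Im)$ with $r_{g}m_{h}=0$, assume $m_{h}\neq 0$, and deduce $r_{g}=0$. The first move is to promote the element-level annihilation to a submodule-level one: for every $s\in \Re$ we have $s(r_{g}m_{h})=r_{g}(sm_{h})=0$, so $r_{g}$ kills the whole graded cyclic submodule $\langle m_{h}\rangle$, i.e.\ $r_{g}\langle m_{h}\rangle =\{0\}$.

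Next I would invoke the $Gr$-multiplication hypothesis on $\langle m_{h}\rangle \leq^{sub}_{\Gamma}\Im$ to write $\langle m_{h}\rangle = K\Im$ for some graded ideal $K\leq^{id}_{\Gamma}\Re$. Since $0\neq m_{h}\in K\Im$, we must have $K\neq 0$ (and also $K\Im \neq \{0\}$). Combining with the previous step gives $r_{g}K\Im =\{0\}$, i.e.\ $r_{g}K\subseteq \mathrm{Ann}_{\Re}(\Im)$, and the faithfulness hypothesis $\mathrm{Ann}_{\Re}(\Im)=0$ collapses this to $r_{g}K=0$.

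The main obstacle is passing from $r_{g}K=0$ with $K\neq 0$ to the required $r_{g}=0$; this is not a formal consequence of multiplication and faithfulness for an arbitrary commutative ring, and is presumably the reason the author introduces the zero-divisor set $Z(\Re)$ and the regular set $C(\Re)$ immediately before the statement. I would attempt two strategies. The first is to locate a homogeneous regular element inside $K$: since $K\Im\neq\{0\}$ and $\Im$ is faithful, one hopes $K\cap C(\Re)\cap h(\Re)\neq\emptyset$, so that $r_{g}$ would annihilate a non-zero-divisor and thus vanish. The second, more structural, strategy is to consider the graded submodule $r_{g}\Im$, write $r_{g}\Im =J\Im$ for some graded ideal $J$ via multiplication, note that $KJ\Im = r_{g}K\Im =\{0\}$ forces $KJ=0$ by faithfulness, and then try to leverage a cancellation property of faithful multiplication modules to reduce this to $J=0$; this would give $r_{g}\Im =\{0\}$, hence $r_{g}\in \mathrm{Ann}_{\Re}(\Im)=0$, completing the proof.
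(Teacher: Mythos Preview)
Your skeleton matches the paper's exactly: write the cyclic graded submodule generated by the nonzero homogeneous element as $L\Im$ for some graded ideal $L$, observe $L\neq 0$, multiply by the annihilating scalar to get $(cL)\Im=\{0\}$, and use faithfulness to conclude $cL=\{0\}$. Where you and the paper diverge is precisely at the obstacle you flagged.

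The paper does \emph{not} attempt to deduce $c=0$ from $cL=0$ and $L\neq 0$; instead, its proof opens by assuming the annihilating homogeneous scalar lies in $C(\Re)$, i.e.\ it takes $c\in h(\Re)\cap C(\Re)$ and $0\neq m\in h(\Im)$ with $cm=0$. From $cL=0$ with $L\neq 0$ one immediately gets $c\in Z(\Re)$, contradicting $c\in C(\Re)$. That is the whole role of the sets $Z(\Re)$ and $C(\Re)$ introduced just before the theorem: they enter as a hypothesis on the annihilating element, not (as in your first strategy) as something to be located inside the ideal $K$.

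In other words, the paper is effectively proving the standard ``torsion-free'' statement (no homogeneous \emph{regular} element kills a nonzero homogeneous element), which is strictly weaker than the literal definition recalled in the paper (no nonzero homogeneous element kills a nonzero homogeneous element). Your difficulty is therefore not a gap in your argument but a mismatch in the paper between the recorded definition and what the proof actually establishes. Indeed, the literal version you are aiming for is false without further hypotheses: take any commutative graded ring $\Re$ that is not a domain and the faithful $Gr$-multiplication module $\Im=\Re$; any pair $a,b\in h(\Re)\setminus\{0\}$ with $ab=0$ witnesses failure. Neither of your two proposed strategies can succeed in general, because there is nothing to prove at that level of generality. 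If you add the assumption that $\Re$ is a graded domain, the two notions coincide and your argument (equivalently the paper's) goes through verbatim.
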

\begin{proof}
Suppose not, $\Im $ is not $Gr$-torsion free. Hence there exist $c\in G$-$C(\Re )$ and $0\neq m\in h(\Im )$ with $cm=0.$ Since $\Im $ is a faithful $Gr$-multiplication $\Re $-module, there exists a  $
L$ $\leq^{id} _{\Gamma}$ $\Re$ with $\Re m=L\Im ,$ and so $\Re cm=cL\Im .$ This implies that $(cL)\Im
=\{0\}$. Since $\Im $ is a faithful, $cL=\{0\}$. Hence $c\in G$-$Z(\Re )$ since $L\neq0$ and so $c\notin G$-$C(\Re )$ which is a contradiction. Therefore, $\Im $ is a $Gr$
-torsion free.\end{proof}

\begin{corollary} \label{co2.14}
Let $\Im$ be a faithful $Gr$%
-multiplication $\Re$-module and $U\leq^{sub} _{\Gamma}\Im$, with $J_{gr}(\frac{\Im}{U}%
)=\{U\} $. Then $U$ is a $Gr$-$W$-$J_{gr}$-semiprime submodule of $\Im$ if
and only if for any nonzero $L$ $\leq^{id} _{\Gamma}$ $\Re$, $(U:_{\Im}L)$ is $Gr$%
-$W$-$J_{gr}$-semiprime submodule of $\Im$.
\end{corollary}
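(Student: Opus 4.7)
The plan is to obtain this corollary by chaining together Theorem \ref{th2.13} and Theorem \ref{th2.12}, since all the structural hypotheses line up precisely to allow a direct application.

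First I would observe that the only extra assumption in Theorem \ref{th2.12} beyond what is given here is that $\Im$ be a $Gr$-torsion free $\Re$-module. Since the hypothesis of the corollary gives that $\Im$ is a faithful $Gr$-multiplication $\Re$-module, Theorem \ref{th2.13} immediately supplies $Gr$-torsion freeness. No further verification is needed on this step; the conclusion of Theorem \ref{th2.13} is exactly the missing hypothesis.

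Second, once $\Im$ is known to be $Gr$-torsion free, the remaining hypotheses of Theorem \ref{th2.12} are $U <^{sub}_{\Gamma} \Im$ (which is given) and $J_{gr}(\Im/U) = \{U\}$ (also given). Therefore Theorem \ref{th2.12} applies verbatim, and its biconditional is exactly the biconditional we want: $U$ is $Gr$-$W$-$J_{gr}$-semiprime in $\Im$ if and only if $(U:_{\Im} L)$ is $Gr$-$W$-$J_{gr}$-semiprime in $\Im$ for every nonzero $L \leq^{id}_{\Gamma} \Re$.

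There is no real obstacle here since the statement is a formal consequence of the two previously established theorems; the only thing to check is the compatibility of hypotheses, which is immediate. So the written proof can be a single sentence invoking Theorem \ref{th2.13} to promote the faithful $Gr$-multiplication assumption to $Gr$-torsion freeness, followed by a direct appeal to Theorem \ref{th2.12}.
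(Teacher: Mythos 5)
Your proposal is correct and is exactly the paper's own argument: the paper likewise proves this corollary by citing Theorem \ref{th2.13} to obtain $Gr$-torsion freeness from the faithful $Gr$-multiplication hypothesis and then applying Theorem \ref{th2.12} directly.
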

\begin{proof}
Follows by Theorem \ref{th2.13} and Theorem \ref{th2.12}.
\end{proof}

\begin{theorem} \label{th2.15}
Let $U$ be $Gr$-small
submodule of $\Im $ with $J_{gr}(\Im )$ be a $Gr$-$W$-semiprime
submodule of $\Im $. Then $U$ is a $Gr$-$W$-$J_{gr}$-semiprime submodule of $%
\Im $.
\end{theorem}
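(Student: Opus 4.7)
The plan is to first establish the inclusion $U \subseteq J_{gr}(\Im)$ and then reduce the semiprime-type condition on $U$ to the hypothesis that $J_{gr}(\Im)$ is $Gr$-$W$-semiprime.

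First I would verify the key auxiliary fact: every $Gr$-small graded submodule of $\Im$ sits inside every $Gr$-maximal submodule of $\Im$, and consequently inside $J_{gr}(\Im)$. Concretely, let $B$ be any $Gr$-maximal submodule of $\Im$ and suppose, for contradiction, that $U \not\subseteq B$. Then $B + U$ is a graded submodule of $\Im$ that strictly contains $B$, so by maximality $B + U = \Im$. Since $U$ is $Gr$-small, this forces $B = \Im$, contradicting the properness of $B$. Hence $U \subseteq B$ for every $Gr$-maximal submodule, and intersecting over all such $B$ yields $U \subseteq J_{gr}(\Im)$. The edge case in which $\Im$ has no $Gr$-maximal submodule is harmless because the paper takes $J_{gr}(\Im) = \Im$ in that case, so the containment $U \subseteq J_{gr}(\Im)$ still holds trivially.

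With that inclusion in hand, the main argument is short. Take $r_g \in h(\Re)$, $m_h \in h(\Im)$ and $n \in \mathbb{Z}^{+}$ with $0 \neq r_g^{n} m_h \in U$. Since $U \subseteq J_{gr}(\Im)$, we have $0 \neq r_g^{n} m_h \in J_{gr}(\Im)$, and the assumption that $J_{gr}(\Im)$ is a $Gr$-$W$-semiprime submodule of $\Im$ gives $r_g m_h \in J_{gr}(\Im)$. But $J_{gr}(\Im) \subseteq U + J_{gr}(\Im)$, so $r_g m_h \in U + J_{gr}(\Im)$, which is exactly the condition defining a $Gr$-$W$-$J_{gr}$-semiprime submodule.

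The only genuine obstacle is the auxiliary inclusion $U \subseteq J_{gr}(\Im)$; once it is in place, the theorem is immediate. The obstacle is mild but worth handling carefully in the graded category, where one must check that $B + U$ is indeed a graded submodule (which is clear, being the sum of two graded submodules) and that the definition of $Gr$-smallness applies with $V = B + U$ playing the role of the complementary graded submodule. No further machinery beyond the definition of $J_{gr}(\Im)$ as the intersection of $Gr$-maximal submodules and the hypothesis on $J_{gr}(\Im)$ is needed.
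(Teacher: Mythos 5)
Your proposal is correct and follows essentially the same route as the paper: both arguments reduce to the inclusion $U \subseteq J_{gr}(\Im)$ and then apply the $Gr$-$W$-semiprime hypothesis on $J_{gr}(\Im)$ together with $J_{gr}(\Im)\subseteq U+J_{gr}(\Im)$. The only difference is that the paper obtains $U\subseteq J_{gr}(\Im)$ by citing the characterization of $J_{gr}(\Im)$ as the sum of all $Gr$-small submodules, whereas you prove the inclusion directly from the definitions of $Gr$-smallness and $Gr$-maximality, which makes the argument self-contained.
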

\begin{proof}
Let $r_{g}\in h(\Re),$ $m_{h}\in
h(\Im)$ and $n\in
\mathbb{Z}
^{+}$ with $0\neq r_{g}^{n}m_{h}\in U$. Since $U$ is a $Gr$-small submodule of $\Im$, then by \cite[Theorem 2.10]{KAl}, $U\subseteq J_{gr}(\Im)=\sum \{A$ $:$ $A$ is a $Gr$-small submodule
of $\Im\}$, so $0\neq r_{g}^{n}m_{h}\in J_{gr}(\Im)$, since $%
J_{gr}(\Im)$ is a $Gr$-$W$-semiprime submodule of $\Im$ then $r_{g}m_{h}\in
J_{gr}(\Im)\subseteq U+J_{gr}(\Im)$. Therefore $U$ is a $Gr$-$W$-$J_{gr}$%
-semiprime submodule of $\Im$.
\end{proof}
Recall from \cite{NV3} that a graded $\Re $-module $\Im $ is said to be a graded
finitely generated ($Gr$-finitely generated) if $\Im =\Re a_{g1}+$\textperiodcentered \textperiodcentered \textperiodcentered $+\Re a_{gn}$
for some $a_{g1},a_{g2},...,a_{gn}\in h(\Im ).$

%
\begin{theorem}\label{th2.17}
Let $\Im $ be a $Gr$-finitely generated $Gr$-multiplication $\Re$-module and $L$ be a $Gr$-$W$-$%
J_{gr}$-semiprime ideal of $\Re$ with $ann_{\Re}(\Im )\subseteq L$, then $%
L\Im $ is a $Gr$-$W$-$J_{gr}$-semiprime submodule of $\Im $.
\end{theorem}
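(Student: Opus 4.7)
The plan is to leverage the $Gr$-multiplication structure of $\Im$ to transfer the problem from submodules of $\Im$ to graded ideals of $\Re$, apply the ideal-level consequence of the $Gr$-$W$-$J_{gr}$-semiprime hypothesis on $L$ (via Theorem \ref{th2.4}), and then translate the conclusion back up to $\Im$ via multiplication by $\Im$. To this end, I first take $r_g \in h(\Re)$, $m_h \in h(\Im)$ and $n \in \mathbb{Z}^+$ with $0 \neq r_g^n m_h \in L\Im$, aiming to show $r_g m_h \in L\Im + J_{gr}(\Im)$.

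Since $\Im$ is $Gr$-multiplication, the graded cyclic submodule $\langle m_h \rangle$ can be written as $K\Im$ for some graded ideal $K$ of $\Re$. Thus $\langle r_g^n \rangle K \Im = \langle r_g^n \rangle \langle m_h \rangle \subseteq L\Im$. Using the bijective correspondence between graded submodules of $\Im$ and graded ideals of $\Re$ containing $ann_{\Re}(\Im)$, which is available because $\Im$ is $Gr$-finitely generated and $Gr$-multiplication, and combining this with the hypothesis $ann_{\Re}(\Im) \subseteq L$, the inclusion at the submodule level pulls back to $\langle r_g^n \rangle K \subseteq L$ at the ideal level.

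Next I claim $\langle r_g^n \rangle K \neq \{0\}$: if it were zero, then $\langle r_g^n \rangle K \Im = \{0\}$ and in particular $r_g^n m_h \in \langle r_g^n \rangle \langle m_h \rangle = \{0\}$, contradicting $r_g^n m_h \neq 0$. Hence $\{0\} \neq \langle r_g^n \rangle K \subseteq L$, and applying Theorem \ref{th2.4} to the graded ideal $L$ viewed as a graded submodule of the graded $\Re$-module $\Re$, with the ideal $K$ in the role of the auxiliary graded submodule, gives $\langle r_g \rangle K \subseteq L + J_{gr}(\Re)$.

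Finally I multiply through by $\Im$:
$$r_g m_h \in \langle r_g \rangle \langle m_h \rangle = \langle r_g \rangle K \Im \subseteq L\Im + J_{gr}(\Re)\Im.$$
Invoking the inclusion $J_{gr}(\Re)\Im \subseteq J_{gr}(\Im)$, the graded analogue of the classical fact that $J(R)M$ lies inside every maximal submodule of $M$ when $M$ is finitely generated, I conclude $r_g m_h \in L\Im + J_{gr}(\Im)$, as required. The main obstacle I expect will be in cleanly justifying the two auxiliary ingredients used in passing, namely the implication $I\Im \subseteq J\Im \Rightarrow I \subseteq J$ when $ann_{\Re}(\Im) \subseteq J$, and the inclusion $J_{gr}(\Re)\Im \subseteq J_{gr}(\Im)$; both depend essentially on the $Gr$-finitely generated hypothesis, and if they are not directly cited from the literature one would need a short lemma (or an appeal to a graded Nakayama-type statement) before the main argument can go through.
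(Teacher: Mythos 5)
Your proof is correct and follows essentially the same route as the paper's: write $\langle m_h\rangle=K\Im$, pull the containment back to the ideal level using the finitely generated $Gr$-multiplication structure (the paper cites \cite[Lemma 3.9]{AA} for exactly the step you flagged, obtaining $r_g^nK\subseteq L+ann_{\Re}(\Im)\subseteq L$), apply the $Gr$-$W$-$J_{gr}$-semiprime hypothesis on $L$, and multiply back by $\Im$ using $J_{gr}(\Re)\Im\subseteq J_{gr}(\Im)$. The nonvanishing argument and every other step match the paper's proof.
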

\begin{proof}
Let $r_{g}\in h(\Re)$, $%
m_{h}\in h(\Im )$ and $k\in
\mathbb{Z}
^{+}$ with $0\neq r_{g}^{k}m_{h}\in L\Im $. Then $\{0\}\neq r_{g}^{k}\langle m_{h}\rangle \subseteq L\Im .$ Since
$\Im $ is a $Gr$-multiplication so $\langle m_{h}\rangle =J\Im $ for some
$J\leq^{id} _{\Gamma}\Re $, hence $\{0\}\neq r_{g}^{k}J\Im \subseteq L\Im .$
This implies that $\{0\}\neq r_{g}^{k}J\subseteq L+ann_{\Re}(\Im )$ by \cite[Lemma 3.9]{AA}. Since $ann_{\Re}(\Im )\subseteq L$ it follows that $\{0\}\neq
r_{g}^{k}J\subseteq L$. Hence $r_{g}J\subseteq L+J_{gr}(\Re)$ as $L$ is a $%
Gr $-$W$-$J_{gr}$-semiprime ideal of $\Re.$ Thus $r_{g}J\Im \subseteq L\Im
+J_{gr}(\Re)\Im \subseteq L\Im +J_{gr}(\Im )$, and so $r_{g}m_{h}\in
r_{g}\langle m_{h}\rangle \subseteq L\Im +J_{gr}(\Im )$. Therefore, $L\Im $
is a $Gr$-$W$-$J_{gr}$-semiprime submodule of $\Im $.
\end{proof}
The following example shows that the residual of $Gr$-$W$-$J_{gr}$-semiprime
submodule is not necessarily a $Gr$-$W$-$J_{gr}$-semiprime ideal.
\begin{example}
Let $\Gamma=%
\mathbb{Z}
_{2}$ and $\Re=%
\mathbb{Z}
$ be a $\Gamma$-graded ring such that $\Re_{0}=%
\mathbb{Z}
$ and $\Re_{1}=\{0\}$. Let $\Im=%
\mathbb{Z}
_{8}$ be a graded $\Re$-module such that $\Im_{0}=%
\mathbb{Z}
_{8}$ and $\Im_{1}=\{\overline{0}\}$. Let $U=\{\overline{0},\overline{4}%
\}=\langle \overline{4}\rangle $ $\leq^{sub} _{\Gamma}\Im $. So it is
a $Gr$-$W$- $J_{gr}$-semiprime submodule of $\Im$ where $J_{gr}(\Im)=\langle
\overline{2}\rangle $. How ever $(U:_{\Re}\Im)=4%
\mathbb{Z}
$ is not $Gr$-$W$-$J_{gr}$-semiprime ideal of $\Re,$ since $0\neq 2^{2}\cdot
1\in (U:_{\Re}\Im)$ where $2$, $1\in h(\Re)$ but $2\cdot 1\notin
(U:_{\Re}\Im)+J_{gr}(\Re)=4%
\mathbb{Z}
+(0)=4%
\mathbb{Z}
.$
\end{example}

Next theorems show that the residual of $Gr$-$W$-$J_{gr}$-semiprime
submodule is a $Gr$-$W$-$J_{gr}$-semiprime ideal with under conditions.
\begin{theorem} \label{th2.19}
Let $\Im$ be a $Gr$-faithful $\Re$-module and $U\leq^{sub} _{\Gamma}\Im$ with $J_{gr}(\Im/U)=\{U\}$ and $
J_{gr}(\Re)\subseteq (U:_{\Re}\Im)$. Then $U$ is a $Gr$-$W$-$J_{gr}$-semiprime submodule of $\Im$ if and only if $
(U:_{\Re}\Im)$ is a $Gr$-$W$-$J_{gr}$-semiprime ideal of $\Re$.
\end{theorem}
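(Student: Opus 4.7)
The plan is to prove both implications by converting the $J_{gr}$-semiprime conditions into ordinary $Gr$-$W$-semiprime conditions (thanks to the two containment hypotheses) and then applying the characterizations already established in Corollaries \ref{co2.5} and \ref{co2.6}. The preliminary reduction is: Theorem \ref{th2.8} together with $J_{gr}(\Im/U)=\{U\}$ yields $J_{gr}(\Im)\subseteq U$, so $U+J_{gr}(\Im)=U$; and the hypothesis $J_{gr}(\Re)\subseteq (U:_{\Re}\Im)$ gives $(U:_{\Re}\Im)+J_{gr}(\Re)=(U:_{\Re}\Im)$. These collapses are what let the arguments close. Faithfulness of $\Im$ will be used at exactly one step in each direction: to pass between ``$r\neq 0$'' in $\Re$ and ``$r\Im\neq \{0\}$'' in $\Im$.

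For the direction $(\Rightarrow)$, I would start with $r_{g},s_{h}\in h(\Re)$, $n\in\mathbb{Z}^{+}$ such that $0\neq r_{g}^{n}s_{h}\in (U:_{\Re}\Im)$, and introduce the graded submodule $K=s_{h}\Im$. Then $r_{g}^{n}K=r_{g}^{n}s_{h}\Im\subseteq U$, and faithfulness gives $r_{g}^{n}s_{h}\neq 0\Rightarrow r_{g}^{n}s_{h}\Im\neq\{0\}$, hence $\{0\}\neq r_{g}^{n}K\subseteq U$. Now Corollary \ref{co2.6} applied to the $Gr$-$W$-$J_{gr}$-semiprime submodule $U$ delivers $r_{g}K\subseteq U+J_{gr}(\Im)=U$, i.e., $r_{g}s_{h}\Im\subseteq U$, so $r_{g}s_{h}\in (U:_{\Re}\Im)\subseteq (U:_{\Re}\Im)+J_{gr}(\Re)$.

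For the direction $(\Leftarrow)$, I would invoke Corollary \ref{co2.5} as the working characterization: it suffices to show that whenever $r_{g}\in h(\Re)$, $n\in\mathbb{Z}^{+}$, and $\{0\}\neq \langle r_{g}^{n}\rangle \Im\subseteq U$, then $\langle r_{g}\rangle\Im\subseteq U+J_{gr}(\Im)=U$. The condition $r_{g}^{n}\Im\neq \{0\}$ together with faithfulness of $\Im$ forces $r_{g}^{n}\neq 0$, while $r_{g}^{n}\Im\subseteq U$ says $r_{g}^{n}\in (U:_{\Re}\Im)$. Now I apply the $Gr$-$W$-$J_{gr}$-semiprime property of the ideal $(U:_{\Re}\Im)$ to the homogeneous pair $(r_{g},1)$ — this is legitimate because $1\in \Re_{e}$ is homogeneous — obtaining $0\neq r_{g}^{n}\cdot 1\in (U:_{\Re}\Im)$, hence $r_{g}=r_{g}\cdot 1\in (U:_{\Re}\Im)+J_{gr}(\Re)=(U:_{\Re}\Im)$. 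Therefore $r_{g}\Im\subseteq U$.

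The main obstacle, and the only place any real content enters, is the $(\Leftarrow)$ step: one must see that the two-variable ideal condition collapses to the one-variable condition demanded by Corollary \ref{co2.5} by substituting the homogeneous identity $1$, and that faithfulness is precisely what certifies the nonvanishing needed to fire the semiprime property. The remaining manipulations are routine, and the two $J_{gr}$ absorptions mentioned above keep the final membership statements clean.
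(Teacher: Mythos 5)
Your proof is correct and follows essentially the same route as the paper: both directions reduce to Corollaries \ref{co2.5} and \ref{co2.6} after collapsing $U+J_{gr}(\Im)=U$ via Theorem \ref{th2.8} and $(U:_{\Re}\Im)+J_{gr}(\Re)=(U:_{\Re}\Im)$ via the hypothesis, with the $(\Leftarrow)$ direction firing the ideal condition on the pair $(r_{g},1)$. Your explicit use of faithfulness to get $r_{g}^{n}s_{h}\Im\neq\{0\}$ in the forward direction is in fact slightly more careful than the paper's write-up (which asserts this without comment); the only nitpick is that in the $(\Leftarrow)$ direction the implication $r_{g}^{n}\Im\neq\{0\}\Rightarrow r_{g}^{n}\neq 0$ does not need faithfulness.
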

\begin{proof}
.$(\Rightarrow )$Let $
a_{g}$, $b_{h}\in h(\Re)$ and $k\in
\mathbb{Z}
^{+}$ with $0\neq a_{g}^{k}b_{h}\in (U:_{\Re}\Im)$. Hence $\{0\}\neq a_{g}^{k}b_{h}\Im\subseteq U$. Since $U$ is a $Gr$-$W$%
-$J_{gr}$-semiprime submodule of $\Im$, by Corollary \ref{co2.6}, we have $%
a_{g}b_{h}\Im\subseteq U+J_{gr}(\Im).$ By Theorem \ref{th2.8}, we have $%
J_{gr}(\Im)\subseteq U$ since $J_{gr}(\Im/U)=\{U\}.$ This implies that $%
a_{g}b_{h}\Im\subseteq U.$ Thus $a_{g}b_{h}\in (U:_{\Re}\Im)\subseteq
(U:_{\Re}\Im)+J_{gr}(\Re)$. Therefore, $(U:_{\Re}\Im)$ is a $Gr$-$W$-$J_{gr}$%
-semiprime ideal of $\Re$.

$(\Leftarrow )$ Let $r_{g}\in h(\Re)$ and $n\in
\mathbb{Z}
^{+}$ with $\{0\}\neq \langle r_{g}\rangle ^{n}\Im\subseteq U$. Hence $\{0\}\neq \langle r_{g}\rangle ^{n}\subseteq (U:_{\Re}\Im)$ (if
$\langle r_{g}\rangle ^{n}=\{0\}$ then $\langle r_{g}\rangle ^{n}\Im=\{0\}$
as a contradiction), it follows that $0\neq r_{g}^{n}.1\in (U:_{\Re}\Im).$
Hence $r_{g}\cdot 1\in (U:_{\Re}\Im)+J_{gr}(\Re)$ as $(U:_{\Re}\Im)$ is a $%
Gr $-$W$-$J_{gr}$-semiprime ideal of $\Re.$ Since $J_{gr}(\Re)\subseteq
(U:_{\Re}\Im)$, we have $r_{g}\in (U:_{\Re}\Im)$, it follows that $\langle
r_{g}\rangle \subseteq (U:_{\Re}\Im)$. This yields that $\langle
r_{g}\rangle \Im\subseteq U\subseteq U+J_{gr}(\Im)$. Thus $U$ is a $Gr$-$W$ $%
J_{gr}$-semiprime submodule of $\Im$ by Corollary \ref{co2.5}.
\end{proof}
Recall that a graded $\Re$-module $\Im $ is said to be a graded cancellation ($Gr$-cancellation) if for any graded ideals $K$ and $L$ of $\Re$, $K\Im
=L\Im $, we have  $K=L$, see \cite{F}.
\begin{theorem}\label{th2.20}
Let $\Im$ be a $Gr$-finitely generated faithful $Gr$-multiplication $\Re$-module and $U$ $<^{sub}_{\Gamma}\Im$
with $J_{gr}(\Im)=J_{gr}(\Re)\Im$. Then $U$ is a $Gr$-$W$-$J_{gr}$-semiprime
submodule of $\Im$ if and only if $(U:_{\Re}\Im)$ is a $Gr$-$W$-$J_{gr}$-semiprime ideal of $\Re$.
\end{theorem}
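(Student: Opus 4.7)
The plan is to prove both implications by transferring between graded submodules of the form $K\Im$ and graded ideals of $\Re$. The bridge is the cancellation lemma for $Gr$-finitely generated faithful $Gr$-multiplication modules, namely \cite[Lemma 3.9]{AA} used already in the proof of Theorem \ref{th2.17}: if $\{0\}\neq r_g^k J\Im\subseteq L\Im$ with $J,L\leq^{id}_{\Gamma}\Re$, then $\{0\}\neq r_g^k J\subseteq L+ann_{\Re}(\Im)$. Combining this with faithfulness (so $ann_{\Re}(\Im)=\{0\}$) and with $U=(U:_{\Re}\Im)\Im$ (since $\Im$ is $Gr$-multiplication) yields a tight dictionary between graded submodules of $\Im$ and graded ideals of $\Re$. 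The hypothesis $J_{gr}(\Im)=J_{gr}(\Re)\Im$ is precisely what makes this dictionary preserve the $J_{gr}$ error term on both sides.

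For the forward direction, take $a_g,b_h\in h(\Re)$ and $k\in\mathbb{Z}^+$ with $0\neq a_g^k b_h\in (U:_{\Re}\Im)$. Faithfulness promotes this to $\{0\}\neq a_g^k b_h\Im\subseteq U$, and since $b_h\Im$ is a graded submodule of $\Im$, Corollary \ref{co2.6} applied with $K=b_h\Im$ yields $a_g b_h\Im\subseteq U+J_{gr}(\Im)$. Rewriting the right-hand side as $(U:_{\Re}\Im)\Im+J_{gr}(\Re)\Im=\bigl((U:_{\Re}\Im)+J_{gr}(\Re)\bigr)\Im$ and applying the cancellation lemma then gives $a_g b_h\in (U:_{\Re}\Im)+J_{gr}(\Re)+ann_{\Re}(\Im)=(U:_{\Re}\Im)+J_{gr}(\Re)$, as required.

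For the reverse direction, I will verify the criterion of Corollary \ref{co2.5}. Assume $\{0\}\neq\langle r_g^n\rangle\Im\subseteq U$. Then $r_g^n\in (U:_{\Re}\Im)$ and $r_g^n\neq 0$ (otherwise $\langle r_g^n\rangle\Im=\{0\}$), so pairing with the homogeneous unity $1\in h(\Re)$ produces $0\neq r_g^n\cdot 1\in (U:_{\Re}\Im)$. Applying the hypothesis that $(U:_{\Re}\Im)$ is $Gr$-$W$-$J_{gr}$-semiprime to the pair $(r_g,1)$ gives $r_g\in (U:_{\Re}\Im)+J_{gr}(\Re)$. Multiplying by $\Im$ and using the identification $J_{gr}(\Re)\Im=J_{gr}(\Im)$ then yields $\langle r_g\rangle\Im\subseteq U+J_{gr}(\Im)$, and Corollary \ref{co2.5} concludes that $U$ is $Gr$-$W$-$J_{gr}$-semiprime.

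The main technical point is bookkeeping the nonvanishing hypotheses needed by Corollary \ref{co2.6} and by the cancellation lemma. Faithfulness is doing double duty: it upgrades $a_g^k b_h\neq 0$ to $a_g^k b_h\Im\neq 0$ at the start of the forward direction, and it kills $ann_{\Re}(\Im)$ at the end so that the conclusion lands in $(U:_{\Re}\Im)+J_{gr}(\Re)$ rather than in a larger coset; the assumption $J_{gr}(\Im)=J_{gr}(\Re)\Im$ is exactly what permits the graded Jacobson radical to pass cleanly back and forth between ring and module sides. Without either hypothesis the argument breaks down, which explains why the statement replaces the condition $J_{gr}(\Im/U)=\{U\}$ of Theorem \ref{th2.19} by the different but equally essential identity $J_{gr}(\Im)=J_{gr}(\Re)\Im$.
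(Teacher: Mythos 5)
Your argument is correct. The forward direction is essentially identical to the paper's: faithfulness upgrades $0\neq a_g^k b_h$ to $\{0\}\neq a_g^k b_h\Im\subseteq U$, Corollary \ref{co2.6} with $K=b_h\Im$ gives $a_gb_h\Im\subseteq U+J_{gr}(\Im)$, and the identities $U=(U:_{\Re}\Im)\Im$ and $J_{gr}(\Im)=J_{gr}(\Re)\Im$ together with \cite[Lemma 3.9]{AA} and $ann_{\Re}(\Im)=\{0\}$ land $a_gb_h$ in $(U:_{\Re}\Im)+J_{gr}(\Re)$. Your converse, however, takes a slightly different and in fact leaner route: the paper verifies the criterion of Corollary \ref{co2.6} by using the multiplication hypothesis to write an arbitrary graded submodule $K$ as $L\Im$ and then cancelling, whereas you verify only the Corollary \ref{co2.5} criterion by pairing $r_g^n$ with the homogeneous unity $1$ (the same device the paper uses in the converse of Theorem \ref{th2.19}). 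Your version needs only the containment $(U:_{\Re}\Im)\Im\subseteq U$ (automatic) and $J_{gr}(\Re)\Im\subseteq J_{gr}(\Im)$, so it avoids a second appeal to the multiplication structure; both arguments are valid, and the nonvanishing bookkeeping ($r_g^n\neq 0$ because $\langle r_g^n\rangle\Im\neq\{0\}$) is handled correctly.
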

\begin{proof}
$(\Rightarrow )$ Let $%
a_{g} $, $b_{h}\in h(\Re)$ and $k\in
\mathbb{Z}
^{+}$ with $0\neq a_{g}^{k}b_{h}\in (U:_{\Re}\Im)$. Hence $\{0\}\neq a_{g}^{k}b_{h}\Im\subseteq U$.(if $%
a_{g}^{k}b_{h}\Im=\{0\} $ then $a_{g}^{k}b_{h}=0$ since $\Im$ \ is a
faithful as a contradiction). Since $U$ is a $Gr$-$W$-$J_{gr}$-semiprime
submodule of $\Im$, Corollary \ref{co2.6}, we get $a_{g}b_{h}\Im\subseteq
U+J_{gr}(\Im).$ This implies that $a_{g}b_{h}\Im\subseteq
(U:_{\Re}\Im)\Im+J_{gr}(\Im)$ as $\Im$ is $Gr$-multiplication module. Since $%
J_{gr}(\Im)=J_{gr}(\Re)\Im$, we have $a_{g}b_{h}\Im\subseteq
(U:_{\Re}\Im)\Im+J_{gr}(\Re)\Im=((U:_{\Re}\Im)+J_{gr}(\Re))\Im$, so $%
\langle a_{g}b_{h}\rangle \Im\subseteq ((U:_{\Re}\Im)+J_{gr}(\Re))\Im.$
Since $\Im$ is a $Gr$-finitely generated faithful $Gr$-multiplication by \cite[Theorem 2.10]{F}, we get $\langle a_{g}b_{h}\rangle \subseteq
(U:_{\Re}\Im)+J_{gr}(\Re)$. Hence $\langle a_{g}b_{h}\rangle
\subseteq (U:_{\Re}\Im)+J_{gr}(\Re)$, so $a_{g}b_{h}\in (U:_{\Re}\Im)+J_{gr}(\Re)$. Therefore, $(U:_{\Re}\Im)$
is a $Gr$-$W$-$J_{gr}$-semiprime ideal of $\Re$.

$(\Leftarrow )$ Let $\{0\}\neq r_{g}^{n}K\subseteq U$ where $r_{g}\in
h(\Re)$ and $K\leq^{sub} _{\Gamma}\Im $.  Since $\Im $ is a $Gr$-multiplication $\Re$-module, then there
exists nonzero $L\leq^{id} _{\Gamma}\Re $ with $K=L\Im $ it follows that
$\{0\}\neq r_{g}^{n}L\Im \subseteq U$ and hence $\{0\}\neq
r_{g}^{n}L\subseteq (U:_{\Re}\Im )$. So $r_{g}L\subseteq (U:_{\Re}\Im
)+J_{gr}(\Re)$ as $(U:_{\Re}\Im )$ is a $Gr$-$W$-$J_{gr}$-semiprime ideal of
$\Re.$ Hence $r_{g}L\Im \subseteq (U:_{\Re}\Im )\Im +J_{gr}(\Re)\Im
\subseteq (U:_{\Re}\Im )\Im +J_{gr}(\Im ).$ This implies that, $%
r_{g}K\subseteq U+J_{gr}(\Im)\ $as $\Im $ is a $Gr$-multiplication $\Re$%
-module. Thus, $U$ is a $Gr$-$W$-$J_{gr}$-semiprime submodule of $\Im$ by Corollary \ref{co2.6}.
\end{proof}

\begin{theorem}
 Let $\Im$ a be $Gr$-finitely generated
faithful $Gr$-multiplication $\Re$-module and and $U$ $<^{sub}_{\Gamma}\Im$ with $%
J_{gr}(\Im)=J_{gr}(\Re)\Im$. Then the following statements are equivalent:
  \begin{enumerate}[\upshape (i)]
\item $U$ is a $Gr$-$W$-$J_{gr}$-semiprime submodule of $\Im$.

\item $(U:_{\Re}\Im)$ is a $Gr$-$W$-$J_{gr}$-semiprime ideal of $\Re$.

\item $U=L\Im $ for some a $Gr$-$W$-$J_{gr}$-semiprime ideal $L$ of $\Re$.
  \end{enumerate}
\end{theorem}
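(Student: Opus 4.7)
The plan is to close a three-cycle using the immediately preceding results, since the ring-to-module transfer and the module-to-ring transfer have already been packaged as theorems. The equivalence $(i)\Leftrightarrow (ii)$ is exactly the statement of Theorem \ref{th2.20}, so this direction requires nothing beyond citing it, noting that the hypotheses (faithful, $Gr$-finitely generated, $Gr$-multiplication, and $J_{gr}(\Im)=J_{gr}(\Re)\Im$) match verbatim. So the real work is to link (iii) with the other two.

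For $(ii)\Rightarrow (iii)$, I would invoke the fact that in a $Gr$-multiplication $\Re$-module every graded submodule $U$ satisfies $U=(U:_{\Re}\Im)\Im$, which is recorded in the excerpt. Then set $L:=(U:_{\Re}\Im)$; this $L$ is a graded ideal of $\Re$, by (ii) it is a $Gr$-$W$-$J_{gr}$-semiprime ideal, and by the multiplication identity $U=L\Im$, so (iii) holds.

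For $(iii)\Rightarrow (i)$, I would apply Theorem \ref{th2.17} directly: since $\Im$ is faithful we have $ann_{\Re}(\Im)=\{0\}\subseteq L$, and the remaining hypotheses of Theorem \ref{th2.17} ($\Im$ being a $Gr$-finitely generated $Gr$-multiplication module and $L$ being a $Gr$-$W$-$J_{gr}$-semiprime ideal of $\Re$) are precisely what (iii) and the standing assumptions give. Thus $L\Im=U$ is a $Gr$-$W$-$J_{gr}$-semiprime submodule of $\Im$.

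There is no serious obstacle here; the work was already done in Theorems \ref{th2.17} and \ref{th2.20}, and the only minor point to verify is that the $Gr$-multiplication identity $U=(U:_{\Re}\Im)\Im$ produces an honest representation $U=L\Im$ with the correct $L$, which is immediate. I would arrange the proof as the short cycle $(i)\Leftrightarrow(ii)$ by Theorem \ref{th2.20}, $(ii)\Rightarrow(iii)$ by the multiplication identity, and $(iii)\Rightarrow(i)$ by Theorem \ref{th2.17}, keeping each implication to one or two sentences.
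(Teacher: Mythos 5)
Your proposal is correct, and its skeleton --- the cycle $(i)\Leftrightarrow(ii)$ via Theorem \ref{th2.20}, then $(ii)\Rightarrow(iii)$ via the $Gr$-multiplication identity $U=(U:_{\Re}\Im)\Im$ --- coincides with the paper's. The one place you diverge is $(iii)\Rightarrow(i)$: you simply invoke Theorem \ref{th2.17}, observing that faithfulness gives $ann_{\Re}(\Im)=\{0\}\subseteq L$, whereas the paper reproves this implication from scratch: it takes $\{0\}\neq\langle r_g\rangle^{n}\Im\subseteq L\Im$, uses the fact that a $Gr$-finitely generated faithful $Gr$-multiplication module is $Gr$-cancellation (citing \cite[Theorem 2.10]{F}) to cancel $\Im$ and get $\{0\}\neq\langle r_g\rangle^{n}\subseteq L$, applies the ideal hypothesis to get $\langle r_g\rangle\subseteq L+J_{gr}(\Re)$, and then multiplies back by $\Im$, using the standing hypothesis $J_{gr}(\Im)=J_{gr}(\Re)\Im$ and Corollary \ref{co2.5}. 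Your route is shorter, avoids the appeal to graded cancellation, and has the side benefit of showing that the implication $(iii)\Rightarrow(i)$ does not actually need the hypothesis $J_{gr}(\Im)=J_{gr}(\Re)\Im$ (Theorem \ref{th2.17} only needs $ann_{\Re}(\Im)\subseteq L$). The only point worth making explicit when you cite Theorem \ref{th2.17} is that $L\Im=U$ is proper (which the standing assumption $U<^{sub}_{\Gamma}\Im$ supplies), since a $Gr$-$W$-$J_{gr}$-semiprime submodule is by definition proper.
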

\begin{proof}
[(i)$\Rightarrow $(ii)]  By Theorem \ref{th2.20}

\bigskip \lbrack (ii)$\Rightarrow $(iii)] Since $\Im$ is a $Gr$%
-multiplication $\Re$-module, $U=(U:_{\Re}\Im)\Im$ \ where $%
(U:_{\Re}\Im)$ is a $Gr$-$W$- $J_{gr} $-semiprime ideal of $\Re$.

[(iii)$\Rightarrow $(i)] Let $U=L\Im $ for some $Gr$-$W$-$J_{gr}$-semiprime
ideal $L$ of $\Re$. Let $\{0\}\neq \langle r_{g}\rangle ^{n}\Im \subseteq U$
where $r_{g}\in h(\Re)$ and $n\in
\mathbb{Z}
^{+}$, then $\{0\}\neq \langle r_{g}\rangle ^{n}\Im \subseteq L\Im .$ \
Since $\Im $ is a $Gr$-finitely generated faithful $Gr$-multiplication,
by \cite[Theorem 2.10]{F},  $\Im $ is
a $Gr$-cancellation. Thus $\{0\}\neq \langle r_{g}\rangle ^{n}\subseteq L$.
\ Since $L$ is a $Gr$-$W$-$J_{gr}$-semiprime ideal of $\Re$, $\langle
r_{g}\rangle \subseteq L+J_{gr}(\Re)$ it follows that $\langle r_{g}\rangle
\Im \subseteq L\Im +J_{gr}(\Re)\Im $. This yields that $\langle
r_{g}\rangle \Im \subseteq U+J_{gr}(\Im )$ since $J_{gr}(\Im
)=J_{gr}(\Re)\Im$. Therefore, $U$ is a $Gr$-$W$-$J_{gr}$-semiprime
submodule of $\Im $.
\end{proof}

%
%
%
%
%
%
%
%
%
%


\bigskip\bigskip\bigskip\bigskip

\end{document}